\numberwithin{equation}{section}
\newcommand{\R}{\mathbb{R}}
\newcommand{\N}{\mathbb{N}}
\newcommand{\J}{\mathcal{J}}
\newtheorem{lem}{Lemma}
\newtheorem{thm}{Theorem}
\newtheorem{defn}{Definition} 
\theoremstyle{remark}
\newtheorem{remark}{Remark}
\begin{document}
\title[Multiple solutions for a fractional $p$-Laplacian equation]{Multiple solutions for a fractional $p$-Laplacian equation with sign-changing potential}

\author{Vincenzo Ambrosio}
\address{Dipartimento di Matematica e Applicazioni "R. Caccioppoli"\\
         Universit\`a degli Studi di Napoli Federico II\\
         via Cinthia, 80126 Napoli, Italy}
\email{vincenzo.ambrosio2@unina.it}
\keywords{fractional $p$-Laplacian, sign-changing potential, fountain theorem}
\subjclass[2010]{35A15, 35R11, 35J92}

\maketitle

\begin{abstract}
We use a variant of the fountain Theorem to prove the existence of infinitely many weak solutions for the following
fractional $p$-Laplace equation
\begin{equation*}
(-\Delta)_{p}^{s} u + V(x) |u|^{p-2}u = f(x, u) \, \mbox{ in } \, \R^{N}, 
\end{equation*}
where $s\in (0,1)$, $p\geq 2$, $N\geq 2$, $(- \Delta)_{p}^{s}$ is the fractional $p$-Laplace operator, the nonlinearity $f$ is $p$-superlinear at infinity and the potential $V(x)$ is allowed to be sign-changing.
\end{abstract}

\section{\bf Introduction}
\noindent
In this paper we are interested in the study of the nonlinear fractional $p$-Laplacian equation: 
\begin{equation}\label{P}
(- \Delta)_{p}^{s} u + V(x) |u|^{p-2}u = f(x, u) \, \mbox{ in } \, \R^{N},
\end{equation}
where $s\in (0,1)$, $p\geq 2$ and $N\geq 2$. \\
Here $(- \Delta)_{p}^{s}$ is the fractional $p$-Laplace operator defined, for $u$ smooth enough, by setting
$$
(- \Delta)_{p}^{s}u(x)=2\lim_{\epsilon \rightarrow 0} \int_{\R^{N}\setminus B_{\epsilon}(x)} \frac{|u(x)-u(y)|^{p-2} (u(x)-u(y))}{|x-y|^{N+sp}} dy, \quad x\in \R^{N},
$$
up to some normalization constant depending upon $N$ and $s$.\\
When $p=2$, the equation (\ref{P}) arises in the study of the nonlinear Fractional Schr\"odinger equation
\begin{equation*}
\imath \frac{\partial \psi}{\partial t}+(- \Delta)^{s} \psi=H(x,\psi) \mbox{ in } \R^{N}\times \R
\end{equation*}
when we look for standing wave functions $\displaystyle{\psi(x,t)= u(x) e^{-\imath c t}}$, where $c$ is a constant.
This equation was introduced by Laskin \cite{Laskin1, Laskin2} and comes from an extension of the Feynman path integral from the Brownian-like to the Levy-like quantum mechanical paths.\\ 
Nowadays there are many papers dealing with the nonlinear fractional Schr\"odinger equation: see for instance \cite{A, Cheng, DDPW, FQT, FLS, MBR, Secchi1} and references therein.\\
More recently, a new nonlocal and nonlinear operator was considered, namely the fractional $p$-Laplacian.
In the works of Franzina and Palatucci \cite{FP} and of Lindgren and Linqvist \cite{LL}, the eigenvalue problem associated with $(-\Delta)^{s}_{p}$ is studied, in particular some properties of the first eigenvalue and of the higher order eigenvalues are obtained.
Torres \cite{Torres} established an existence result for the problem (\ref{P}) when $f$ is $p$-superlinear and subcritical. 
Iannizzotto et al. \cite{ILPS} investigated existence and multiplicity of solutions for a class of quasi-linear nonlocal problems involving the $p$-Laplacian operator.\\
Motivated by the above papers, we aim to study the multiplicity of nontrivial weak solutions to (\ref{P}), 
when $f$ is $p$-superlinear and $V(x)$ can change sign. \\
More precisely, we require that the potential $V(x)$ satisfies the following assumptions:
\begin{compactenum}[(V1)]
\item $V \in C(\R^{N})$ is bounded from below; 
\item There exists $r>0$ such that 
$$
\lim_{|y|\rightarrow \infty} |\{x\in \R^{N}: |x-y|\leq r, V(x)\leq M\}|=0
$$
for any $M>0$,
\end{compactenum}
while the nonlinearity $f:\R^{N}\times \R\rightarrow \R$ and its primitive $\displaystyle{F(x, t) = \int_{0}^{t} f(x, z) \, dz}$ are such that
\begin{compactenum}[(f1)]
\item $f\in C(\R^{N}\times \R)$,  and there exist $c_{1}>0$ and $p<\nu<p_{s}^{*}$ such that 
$$
|f(x,t)|\leq c_{1}(|t|^{p-1}+|t|^{\nu-1}) \quad \forall (x,t)\in \R^{N}\times \R,
$$  
where $p^{*}_{s}= \frac{Np}{N-sp}$ if $sp<N$ and  $p^{*}_{s}=\infty$ for $sp\geq N$. 
\item $F(x,0)\equiv 0, F(x,t)\geq 0$ for all $(x,t)\in \R^{N}\times \R$ and 
$$
\lim_{|t| \rightarrow \infty} \frac{F(x,t)}{|t|^{p}}=+\infty \mbox{ uniformly in } x\in \R^{N}.
$$
\item There exists $\theta \geq 1$ such that 
$$
\theta \mathcal{F}(x,t)\geq \mathcal{F}(x,\tau t) \quad \forall (x,t)\in \R^{N}\times \R \mbox{ and } \tau \in [0,1]
$$
where $\displaystyle{\mathcal{F}(x,t)=tf(x,t)-pF(x,t)}$.
\item $f(x,-t)=-f(x,t)$ for all $(x,t)\in \R^{N}\times \R$. 
\end{compactenum}
We recall that the conditions $(V1)$ and $(V2)$ on the potential $V$ and $(f1)$-$(f4)$ with $p=2$ and $s=1$, have been used in \cite{ZX} to extend the well-known multiplicity result due to Bartsch and Wang \cite{BW}.
Examples of $V$ and $f$ satisfying the above assumptions are 
\begin{equation*}
V(x)=
\left\{
\begin{array}{ll}
2n|x|-2n(n-1)+c_{0} & \mbox{ if } n-1\leq |x| \leq (2n-1)/2\\
-2n|x|+2n^{2}+c_{0} & \mbox{ if } (2n-1)/2\leq |x|\leq n \quad (n\in \N \mbox{ and } c_{0}\in \R)
\end{array},
\right.
\end{equation*}
and 
$$
f(x,t)=a(x) |t|^{p-2}t \ln(1+|t|) \quad \forall (x,t)\in \R^{N} \times \R,  
$$
where $a(x)$ is a continuous bounded function with positive lower bound.\\
Our main result can be stated as follows. 
\begin{thm}\label{thm1}
Assume that $f$ satisfies $(f1)-(f4)$ and $V$ satisfies $(V1)-(V2)$.
Then the problem (\ref{P}) has  infinitely many nontrivial weak solutions. 
\end{thm}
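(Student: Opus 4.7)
The plan is to recast \eqref{P} as a variational problem on a suitable fractional Sobolev space and apply a variant of the Fountain Theorem (in Zou's form, requiring only a Cerami-type compactness condition and an even $\Z_2$-invariant functional) to obtain an unbounded sequence of critical values. Since $V$ is only bounded below, I would first shift it: pick $V_0>0$ with $\tilde{V}(x):=V(x)+V_0\ge 1$, and replace $f$ by $\tilde{f}(x,t)=f(x,t)+V_0|t|^{p-2}t$ with primitive $\tilde{F}$. A direct check shows that $\tilde f$ still satisfies $(f1)$--$(f4)$ (in particular $\mathcal{F}$ is unchanged, since the added $|t|^p$ terms cancel in $tf-pF$). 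I would then work in
\begin{equation*}
E:=\Bigl\{u\in W^{s,p}(\R^{N}):\int_{\R^{N}}\tilde V(x)|u|^{p}\,dx<\infty\Bigr\},\qquad \|u\|^{p}:=[u]_{s,p}^{p}+\int_{\R^{N}}\tilde V(x)|u|^{p}\,dx,
\end{equation*}
which is a separable, reflexive Banach space. Using $(V2)$ (after a cover argument on balls of radius $r$, a fractional Poincar\'e inequality on each ball, and the Sobolev embedding on $\R^N$) one obtains the key compact embedding $E\hookrightarrow L^{q}(\R^{N})$ for every $q\in[p,p_{s}^{*})$, in complete analogy with Bartsch--Wang.

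The natural energy functional
\begin{equation*}
I(u)=\frac{1}{p}\|u\|^{p}-\int_{\R^{N}}\tilde F(x,u)\,dx
\end{equation*}
is then $C^{1}(E,\R)$ by $(f1)$, even by $(f4)$, and its critical points coincide with the weak solutions of \eqref{P}. Fixing a Schauder basis $\{e_{j}\}$ of $E$ with $Y_{k}=\bigoplus_{j\le k}\R e_{j}$ and $Z_{k}=\overline{\bigoplus_{j\ge k}\R e_{j}}$, I would verify the two geometric hypotheses of the Fountain Theorem. For the estimate on $Y_{k}$, all norms are equivalent in finite dimensions, so $(f2)$ immediately gives $I(u)\to-\infty$ as $\|u\|\to\infty$ in $Y_{k}$, hence $\max_{u\in Y_{k},\,\|u\|=\rho_{k}}I(u)\le 0$ for $\rho_{k}$ large. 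For the estimate on $Z_{k}$, set $\beta_{k}(q):=\sup_{u\in Z_{k},\,\|u\|=1}\|u\|_{L^{q}}$; the compact embedding above forces $\beta_{k}(q)\to 0$ as $k\to\infty$ for every $q\in[p,p_{s}^{*})$. Using $(f1)$ to bound $\tilde F(x,u)\le C(|u|^{p}+|u|^{\nu})$ and choosing $r_{k}\to\infty$ appropriately in terms of $\beta_{k}(\nu)$, one gets $\inf_{u\in Z_{k},\,\|u\|=r_{k}}I(u)\to+\infty$.

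The hard part will be the Cerami condition $(C)_{c}$. Given a sequence $(u_{n})\subset E$ with $I(u_{n})\to c$ and $(1+\|u_{n}\|)\|I'(u_{n})\|_{E^{*}}\to 0$, boundedness is not immediate because the Ambrosetti--Rabinowitz condition is \emph{not} assumed; instead one must exploit the quasi-monotonicity $(f3)$. The standard Jeanjean-type argument goes by contradiction: if $\|u_{n}\|\to\infty$, set $v_{n}=u_{n}/\|u_{n}\|$, pass to a weak limit $v$, and split into the cases $v\equiv 0$ and $v\not\equiv 0$. In the vanishing case one picks $\tau_{n}\in[0,1]$ realizing $I(\tau_{n}u_{n})=\max_{\tau\in[0,1]}I(\tau u_{n})$, uses $(f1)$ together with the compact embedding to push $I(\tau_{n}u_{n})\to+\infty$, and then applies $(f3)$ in the form
\begin{equation*}
pI(u_{n})-\langle I'(u_{n}),u_{n}\rangle=\int_{\R^{N}}\mathcal{F}(x,u_{n})\,dx\ge \frac{1}{\theta}\int_{\R^{N}}\mathcal{F}(x,\tau_{n}u_{n})\,dx=\frac{1}{\theta}\bigl(pI(\tau_{n}u_{n})-\langle I'(\tau_{n}u_{n}),\tau_{n}u_{n}\rangle\bigr),
\end{equation*}
which contradicts the left-hand side being bounded. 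In the non-vanishing case, $(f2)$ forces $\int \tilde F(x,u_{n})/\|u_{n}\|^{p}\to+\infty$, directly contradicting $I(u_{n})/\|u_{n}\|^{p}\to 0$. Once $(u_{n})$ is bounded, standard arguments---weak convergence in $E$, strong convergence in $L^{q}$ for $q\in[p,p_{s}^{*})$ from the compact embedding, the $(S_{+})$ property of the fractional $p$-Laplacian, and uniform control of the nonlinear term via $(f1)$---upgrade the convergence to strong convergence in $E$. With $(C)_{c}$ established for all $c\in\R$ and $I$ even, the Fountain Theorem yields a sequence of critical values $c_{k}\ge b_{k}\to+\infty$, producing infinitely many nontrivial weak solutions of \eqref{P}.
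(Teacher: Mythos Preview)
Your proposal is correct in substance, but it follows a route that is structurally different from the paper's. The paper does \emph{not} verify a Cerami (or Palais--Smale) condition for the single functional $\J$. Instead it invokes Zou's variant fountain theorem (Theorem~\ref{FT} here), whose point is precisely to bypass any a priori compactness hypothesis: one introduces the $\lambda$-parametrized family $\J_{\lambda}(u)=\tfrac{1}{p}\langle A(u),u\rangle-\lambda B(u)$, $\lambda\in[1,2]$, and the abstract theorem delivers, for a.e.\ $\lambda$, \emph{bounded} Palais--Smale sequences automatically. The paper then (i) extracts critical points $u_{n}^{k}$ of $\J_{\lambda_{n}}$ along a sequence $\lambda_{n}\to 1$ (Lemma~\ref{claim1}), (ii) proves $\{u_{n}^{k}\}_{n}$ is bounded via the Jeanjean argument using $(f3)$ (Lemma~\ref{claim2}), and (iii) passes to the limit to obtain critical points of $\J=\J_{1}$ at levels $c_{k}\to\infty$. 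Your plan instead handles the single functional $I$ directly, proving the Cerami condition $(C)_{c}$ for it and then applying a Fountain theorem with Cerami compactness. Both strategies use the same $(f3)$-based contradiction with $\tau_{n}\in[0,1]$ maximizing $t\mapsto I(tu_{n})$; the difference is \emph{where} it is applied---in the paper to genuine critical points of perturbed functionals (so $\J'_{\lambda_{n}}(u_{n}^{k})=0$ exactly), in your version to a Cerami sequence (so $\langle I'(u_{n}),u_{n}\rangle\to 0$, which is why you need Cerami rather than Palais--Smale). Your route is arguably more direct, avoiding the $\lambda$-family and the double limit, at the cost of needing a Fountain theorem stated under the Cerami condition; the paper's route is the one for which Zou's theorem was designed and keeps every Palais--Smale sequence bounded by construction. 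One minor point: your phrase ``in Zou's form, requiring only a Cerami-type compactness condition'' mislabels the tool---Zou's variant requires \emph{no} compactness hypothesis at all, and that is exactly what the paper exploits.
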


\noindent
In order to prove Theorem \ref{thm1}, we will consider the following family of functionals 
\begin{equation*}
\J_{\lambda}(u)= \frac{1}{p} \left[\iint_{\R^{2N}} \frac{|u(x)-u(y)|^{p}}{|x-y|^{N+sp}} dxdy  +\int_{\R^{N}} V(x) |u(x)|^{p} dx \right]- \lambda \int_{\R^{N}} F(x,u) \, dx,
\end{equation*}
with $\lambda \in [1,2]$ and $u\in E$,
where $E$ is the completion of $C^{\infty}_{0}(\R^{N})$ with respect to the norm
$$
||u||_{E}^{p}=\iint_{\R^{2N}} \frac{|u(x)-u(y)|^{p}}{|x-y|^{N+sp}} dxdy+ \int_{\R^{N}} V(x)|u(x)|^{p} dx,
$$
and we will show that $\J_{\lambda}$ satisfies the assumptions of the following variant 
of fountain Theorem due to Zou \cite{Zou}:
\begin{thm}\cite{Zou}\label{FT}
Let $(E, ||\cdot||)$ be a Banach space, $E=\overline{\oplus _{j\in \N} X_{j}}$, with $\dim X_{j}<\infty$ for any $j\in \N$.
Set $Y_{k}=\oplus_{j=1}^{k} X_{j}$ and $Z_{k}=\overline{\oplus_{j=k}^{\infty} X_{j}}$.
Let $\mathcal{J}_{\lambda}: E \rightarrow \R$ a family of $C^{1}(E, \R)$ functionals defined by
$$
\J_{\lambda}(u)=A(u)-\lambda B(u), \quad \lambda \in [1, 2].
$$
Assume that $\J_{\lambda}$ satisfies the following assumptions:
\begin{compactenum}[(i)]
\item $\J_{\lambda}$ maps bounded sets to bounded sets uniformly for $\lambda \in [1, 2]$, $\J_{\lambda}(-u)=\J_{\lambda}(u)$
for all $(\lambda, u)\in [1, 2] \times E$.
\item $B(u) \geq 0$ for all $u\in E$, and $A(u)\rightarrow \infty$ or $B(u)\rightarrow \infty$ as $||u||\rightarrow \infty$.
\item There exists $r_{k}>\rho_{k}$ such that
$$
\beta_{k}(\lambda)=\max_{u\in Y_{k}, ||u||=r_{k}} \J_{\lambda}(u)<
\alpha_{k}(\lambda)=\inf_{u\in Z_{k}, ||u||=\rho_{k}} \J_{\lambda}(u), \quad \forall  \lambda \in [1, 2].
$$
\end{compactenum}
Then
$$
\alpha_{k}(\lambda)\leq \xi_{k}(\lambda)=\inf_{\gamma \in \Gamma_{k}} \max_{u\in B_{k}} \J_{\lambda}(\gamma(u)) \quad \forall \lambda \in [1, 2],
$$
where 
$$
B_{k}=\{u\in Y_{k}: ||u||\leq r_{k} \}
\mbox{ and }
\Gamma_{k}=\{\gamma \in C(B_{k}, X): \gamma \mbox{ is odd }, \gamma=Id \mbox{ on } \partial B_{k} \}.
$$
Moreover, for a.e. $\lambda \in [1,2]$, there exists a sequence $\{u^{k}_{m}(\lambda)\}_{m\in \N}\subset E$ such that
\begin{align*}
\sup_{m\in \N} ||u^{k}_{m}(\lambda)||<\infty, \J'_{\lambda}(u^{k}_{m}(\lambda))\rightarrow 0 \mbox{ and } \J_{\lambda}(u^{k}_{m}(\lambda))\rightarrow \xi_{k}(\lambda) \mbox{ as } m \rightarrow \infty.
\end{align*}
\end{thm}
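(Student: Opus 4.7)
The plan is to establish Zou's variant by combining a classical topological intersection argument for min--max levels with the Struwe--Jeanjean monotonicity trick that exploits the parameter $\lambda$. First I would prove the inequality $\alpha_{k}(\lambda)\leq \xi_{k}(\lambda)$: given any $\gamma\in\Gamma_{k}$, I would show that $\gamma(B_{k})\cap\{u\in Z_{k}:\|u\|=\rho_{k}\}\neq\emptyset$. Because $\gamma$ is odd on $\partial B_{k}$ and $Y_{k}$ is finite-dimensional, this follows from a Borsuk--Ulam / Brouwer degree computation applied to the composition of $\gamma$ with the projection onto $Y_{k-1}$ (the topological complement of $Z_{k}$ in some Galerkin truncation), yielding a point where $\gamma(u)$ has zero $Y_{k-1}$-component and norm $\rho_{k}$. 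Evaluating $\J_{\lambda}$ at such an intersection point and taking the max over $B_{k}$ gives $\max_{B_{k}}\J_{\lambda}\circ\gamma\geq\alpha_{k}(\lambda)$, from which the inf over $\Gamma_{k}$ yields the desired bound.

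Next I would exploit the specific structure $\J_{\lambda}=A-\lambda B$ with $B\geq 0$. Since each individual value $\J_{\lambda}(\gamma(u))$ is non-increasing in $\lambda$, the max and then the inf preserve monotonicity, so $\lambda\mapsto\xi_{k}(\lambda)$ is non-increasing on $[1,2]$. By Lebesgue's theorem a monotone function is differentiable a.e., so for a.e.\ $\lambda_{0}\in[1,2]$ the derivative $\xi_{k}'(\lambda_{0})$ exists and is finite. Fix such a $\lambda_{0}$ and a sequence $\lambda_{n}\uparrow\lambda_{0}$. For each $n$ I would select a nearly optimal path $\gamma_{n}\in\Gamma_{k}$ satisfying $\max_{u\in B_{k}}\J_{\lambda_{n}}(\gamma_{n}(u))\leq\xi_{k}(\lambda_{n})+(\lambda_{0}-\lambda_{n})$.

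The key step, and the main technical obstacle, is then to harvest boundedness of an approximate critical sequence from the differentiability of $\xi_{k}$. On the ``peak set'' $M_{n}=\{u\in B_{k}:\J_{\lambda_{0}}(\gamma_{n}(u))\geq \xi_{k}(\lambda_{0})-(\lambda_{0}-\lambda_{n})\}$, the identity $\J_{\lambda_{n}}(v)=\J_{\lambda_{0}}(v)+(\lambda_{0}-\lambda_{n})B(v)$ together with the two inequalities above yields
\[
B(\gamma_{n}(u))\leq \frac{\xi_{k}(\lambda_{n})-\xi_{k}(\lambda_{0})}{\lambda_{0}-\lambda_{n}}+2,
\]
so $B(\gamma_{n}(u))$ is bounded on $M_{n}$ as $n\to\infty$ by $|\xi_{k}'(\lambda_{0})|+2$. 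Since $\J_{\lambda_{0}}(\gamma_{n}(u))$ is bounded below on $M_{n}$, also $A(\gamma_{n}(u))=\J_{\lambda_{0}}(\gamma_{n}(u))+\lambda_{0}B(\gamma_{n}(u))$ is bounded there. Assumption~(ii) that $A$ or $B$ is coercive then forces $\sup_{n}\sup_{u\in M_{n}}\|\gamma_{n}(u)\|<\infty$.

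Finally, with this \emph{uniform} norm bound on the maximizing region in hand, I would run a standard quantitative deformation / pseudo-gradient argument for $\J_{\lambda_{0}}$: if no bounded Palais--Smale sequence at level $\xi_{k}(\lambda_{0})$ existed in a neighborhood of the bounded set $\bigcup_{n}\gamma_{n}(M_{n})$, one could construct an odd deformation $\eta$ pushing $\gamma_{n}$ down strictly below $\xi_{k}(\lambda_{0})$ while remaining equal to the identity on $\partial B_{k}$ (the symmetry assumption (i) is used here to keep the deformed path in $\Gamma_{k}$). The resulting path $\eta\circ\gamma_{n}$ would contradict the definition of $\xi_{k}(\lambda_{0})$ for $n$ large. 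Hence for a.e.\ $\lambda\in[1,2]$ a bounded sequence $\{u^{k}_{m}(\lambda)\}_{m}$ with $\J_{\lambda}'(u^{k}_{m}(\lambda))\to 0$ and $\J_{\lambda}(u^{k}_{m}(\lambda))\to\xi_{k}(\lambda)$ exists, which is the desired conclusion.
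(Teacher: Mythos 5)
The paper does not prove Theorem \ref{FT}: it is quoted from Zou's article \cite{Zou} and used as a black box. Your outline correctly reconstructs the argument of that source --- the Borsuk-type intersection lemma giving $\alpha_{k}(\lambda)\leq\xi_{k}(\lambda)$, the monotonicity and a.e.\ differentiability of $\lambda\mapsto\xi_{k}(\lambda)$, the resulting uniform bounds on $B\circ\gamma_{n}$ and $A\circ\gamma_{n}$ over the peak sets combined with hypothesis (ii), and the odd quantitative deformation --- so it is essentially the same proof as in the cited reference.
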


\begin{remark}
By using $(V1)$ we know that there exists $V_{0}>0$ such that $V_{1}(x)=V(x)+V_{0}\geq 1$ for any $x\in \R^{N}$. Let $f_{1}(x,t)=f(x,t)+V_{0}|t|^{p-2}t$ for all $(x,t) \in \R^{N}\times \R$.
Then it is easy to verify that the study of (\ref{P}) is equivalent to investigate the following problem
\begin{equation*}
(- \Delta)_{p}^{s} u + V_{1}(x) |u|^{p-2}u = f_{1}(x, u) \, \mbox{ in } \, \R^{N}.
\end{equation*}
From now on, we will assume that $V(x)\geq 1$ for any $x\in \R^{N}$ in $(V1)$.
\end{remark}

\section{\bf Preliminaries and functional setting}

\noindent
In this preliminary Section, for the reader's convenience, we recall some basic results related to the fractional Sobolev spaces.
For more details about this topic we refer to \cite{DPV}.

\noindent
Let  $u:\R^{N}\rightarrow \R$ be a measurable function. We say that $u$ belongs to the space $W^{s,p}(\R^{N})$ if $u\in L^{p}(\R^{N})$ and
$$
[u]_{W^{s,p}(\R^{N})}^{p}:=\iint_{\R^{2N}} \frac{|u(x)-u(y)|^{p}}{|x-y|^{N+sp}} dxdy<\infty.
$$
Then $W^{s,p}(\R^{N})$ is a Banach space with respect to the norm
$$
||u||_{W^{s,p}(\R^{N})}=\left[[u]^{p}_{W^{s,p}(\R^{N})} + |u|^{p}_{L^{p}(\R^{N})}\right]^{1/p}. 
$$

\noindent
We recall the main embeddings results for this class of fractional Sobolev spaces:
\begin{thm}\cite{DPV}\label{ce}
Let $s\in (0,1)$ and $p\in [1,\infty)$ be such that $s p<N$.
Then there exists $C=C(N, p, s)>0$ such that
\begin{equation*}
|u|_{L^{p^{*}_{s}}(\R^{N})} \leq C ||u||_{W^{s,p}(\R^{N})}.
\end{equation*}
for any $u\in W^{s,p}(\R^{N})$.
Moreover the embedding $W^{s,p}(\R^{N})\subset L^{q}(\R^{N})$ is locally compact whenever $q<p^{*}_{s}$.\\
If $sp=N$ then $W^{s,p}(\R^{N})\subset L^{q}(\R^{N})$ for any $q\in [p, \infty)$.\\
If $sp>N$ then $W^{s,p}(\R^{N})\subset C^{0,\frac{sp-N}{p}}_{loc}(\R^{N})$.
\end{thm}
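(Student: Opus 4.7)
Since Theorem \ref{ce} collects several classical embedding results for fractional Sobolev spaces, my plan is to handle each regime separately, reducing by density of $C_{0}^{\infty}(\R^{N})$ in $W^{s,p}(\R^{N})$ to smooth, compactly supported $u$, and relying on a level-set decomposition in the critical case.

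For the subcritical embedding $W^{s,p}(\R^{N})\hookrightarrow L^{p^{*}_{s}}(\R^{N})$ when $sp<N$, I would use a layer-cake argument. Writing $A_{k}=\{|u|>2^{k}\}$ and $D_{k}=A_{k}\setminus A_{k+1}$, one has
\begin{equation*}
|u|_{L^{p^{*}_{s}}(\R^{N})}^{p^{*}_{s}} \leq \sum_{k\in\Z} 2^{(k+1)p^{*}_{s}}|D_{k}|.
\end{equation*}
For $x\in D_{k-1}$ and $y\in \R^{N}\setminus A_{k}$ one has $|u(x)-u(y)|\geq 2^{k-1}$, so restricting the Gagliardo integral to these pairs gives
\begin{equation*}
[u]_{W^{s,p}(\R^{N})}^{p} \geq c\sum_{k} 2^{(k-1)p}\iint_{D_{k-1}\times(\R^{N}\setminus A_{k})}\frac{dx\,dy}{|x-y|^{N+sp}}.
\end{equation*}
A purely geometric lemma bounds this inner double integral from below by $c|D_{k-1}|^{1-sp/N}$ (using that disjoint sets of finite measure are separated on a scale comparable to the $N$th root of their measure). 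Substituting into the two displays and rebalancing via summation by parts or a discrete Hölder across $k$ produces the desired inequality with the critical exponent $p^{*}_{s}=Np/(N-sp)$.

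For the local compactness part, I would apply the Riesz–Fréchet–Kolmogorov theorem on any ball $B\subset\R^{N}$. Uniform $L^{p}$-equicontinuity, namely $\int_{B}|u(x+h)-u(x)|^{p}\,dx\to 0$ as $|h|\to 0$ uniformly in the unit ball of $W^{s,p}(\R^{N})$, is obtained by splitting the Gagliardo-type translation integral at a scale proportional to $|h|$ and estimating the small-scale part by $|h|^{sp}[u]_{W^{s,p}(\R^{N})}^{p}$. Interpolation between $L^{p}$ and the continuous $L^{p^{*}_{s}}$-embedding just established then upgrades this to $L^{q}$-equicontinuity for every $q<p^{*}_{s}$, yielding local compactness.

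The borderline case $sp=N$ is recovered by applying the subcritical embedding with a slightly smaller $s'<s$ (so that $s'p<N$ and $p^{*}_{s'}$ is any prescribed finite exponent) and then using Hölder's inequality locally to cover the full range $q\in[p,\infty)$. The supercritical Morrey case $sp>N$ follows from the pointwise estimate
\begin{equation*}
|u(x)-u(y)| \leq C|x-y|^{s-N/p}[u]_{W^{s,p}(\R^{N})},
\end{equation*}
obtained by comparing $u(x)$ and $u(y)$ with averages over $B(x,|x-y|)$ and bounding the resulting oscillations by the Gagliardo seminorm (where the condition $sp>N$ makes the relevant radial integral converge). The main obstacle is the geometric lemma in the subcritical step: identifying the sharp lower bound on the double integral of $|x-y|^{-N-sp}$ over disjoint level sets, which is precisely where the Sobolev scaling $p^{*}_{s}=Np/(N-sp)$ emerges. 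Every other case then essentially borrows from, or degenerates to, that argument.
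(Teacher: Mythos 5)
The paper offers no proof of this statement: Theorem \ref{ce} is quoted as a known result from \cite{DPV}, so there is no internal argument to compare against. Your sketch reproduces, in outline, exactly the proofs given in that reference: the dyadic level-set (layer-cake) argument for the critical embedding, the Riesz--Fr\'echet--Kolmogorov criterion with the translation estimate $\int|u(\cdot+h)-u|^{p}\,dx\leq C|h|^{sp}[u]_{W^{s,p}(\R^{N})}^{p}$ for local compactness, reduction to a smaller $s'$ with $s'p<N$ for the borderline case $sp=N$, and the Morrey-type oscillation estimate for $sp>N$; all of these steps are sound. One quantitative slip is worth flagging: the geometric lemma you invoke does not yield $\iint_{D_{k-1}\times(\R^{N}\setminus A_{k})}|x-y|^{-N-sp}\,dx\,dy\geq c\,|D_{k-1}|^{1-sp/N}$. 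What one actually gets, by comparing $A_{k}$ with a ball of the same measure centred at a point $x\in D_{k-1}$, is the lower bound $c\,|D_{k-1}|\,|A_{k}|^{-sp/N}$, and since $|A_{k}|$ need not be comparable to $|D_{k-1}|$, the ``rebalancing across $k$'' is not a routine discrete H\"older step: it is precisely where \cite{DPV} needs their additional summation/convexity lemmas for the sequences $a_{k}=|A_{k}|$ and $d_{k}=|D_{k}|$. You correctly identify this as the main obstacle, but as written the stated form of the lemma is false; with that step replaced by the actual argument of \cite{DPV}, your sketch is a faithful account of the standard proof.
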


\noindent 
Now we give the definition of weak solution for the problem (\ref{P}).\\
Taking into account the presence of the potential $V(x)$, we denote by $E$ the closure of $C^{\infty}_{0}(\R^{N})$  with respect to the norm
$$
||u||:=\left([u]^{p}_{W^{s,p}(\R^{N})}+|u|_{V}^{p} \right)^{1/p}, |u|^{p}_{V}=\int_{\R^{N}} V(x) |u(x)|^{p} dx.
$$
Equivalently
$$
E=\left\{u\in L^{p^{*}_{s}}(\R^{N}): [u]_{W^{s,p}(\R^{N})},|u|_{V}<\infty \right\}.
$$
Let us denote by $(E^{*}, ||\cdot||_{*})$ the dual space of $(E, ||\cdot||)$.\\
We define the nonlinear operator $A: E \rightarrow E^{*}$ by setting
$$
\langle A(u),v \rangle=\iint_{\R^{2N}} \frac{|u(x)-u(y)|^{p-2}(u(x)-u(y))}{|x-y|^{N+sp}}(v(x)-v(y))\, dx dy + \int_{\R^{N}} V(x)|u|^{p-2}uv \, dx,
$$
for $u,v\in E$.
Here $\langle \cdot, \cdot \rangle$ denotes the duality pairing between $E$ and $E^{*}$.\\
Let
$$
B(u)=\int_{\R^{N}} F(x,u) dx
$$
for $u\in E$,
and we set $\J(u)=\frac{1}{p} \langle A(u), u \rangle -B(u)$ for $u\in E$.

\begin{defn}
We say that $u\in E$ is a weak solution to (\ref{P}) if $u$ satisfies
$$
\langle A(u),v \rangle=\langle B'(u),v \rangle
$$
for all $v\in E$. 
\end{defn}

\noindent
Now we show that the following compactness result holds. 
\begin{lem}\label{T1}
Under the assumption $(V1)$ and $(V2)$, the embedding $E\subset L^{q}(\R^{N})$ is compact for any $q\in [p, p^{*}_{s})$.
\end{lem}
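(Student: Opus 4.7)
The plan is to use a Bartsch--Wang type compactness argument adapted to the fractional Sobolev setting. First, by the Remark we may assume $V(x) \geq 1$, so for any bounded sequence $\{u_{n}\} \subset E$ one has $||u_{n}||^{p}_{W^{s,p}(\R^{N})} \leq ||u_{n}||^{p} \leq C$. Hence $\{u_{n}\}$ is bounded in $W^{s,p}(\R^{N})$, and by Theorem \ref{ce} also in $L^{p_{s}^{*}}(\R^{N})$, with the inclusion $W^{s,p}(\R^{N}) \subset L^{q}_{\mathrm{loc}}(\R^{N})$ locally compact for $q < p_{s}^{*}$. Passing to a subsequence, $u_{n} \rightharpoonup u$ in $E$, $u_n \to u$ a.e.\ in $\R^{N}$, and $u_{n} \to u$ in $L^{q}(B_{R})$ for every $R>0$ and every $q \in [p, p_{s}^{*})$.

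The core of the argument is the uniform tail estimate
\[
\lim_{R \to \infty} \sup_{n \in \N} \int_{|x| > R} |u_{n} - u|^{p} \, dx = 0.
\]
Setting $v_{n} := u_{n} - u$ and splitting the domain according to the level sets of $V$, the piece on $\{V > M\}$ is at most $\frac{1}{M}|v_{n}|_{V}^{p} \leq C/M$, which is small for $M$ large. For the complementary piece, cover $\{|x|>R\}$ by balls $\{B_{r}(y_{i})\}$ with $|y_{i}| > R - r$ of bounded overlap, and on each ball apply H\"older's inequality with exponents $p_{s}^{*}/p$ and $N/(sp)$, which yields
\[
\int_{B_{r}(y_{i}) \cap \{V \leq M\}} |v_{n}|^{p} \, dx \leq |A_{M}(y_{i})|^{sp/N} \, |v_{n}|_{L^{p_{s}^{*}}(B_{r}(y_{i}))}^{p},
\]
where $A_{M}(y) := \{x \in \R^{N} : |x-y| \leq r, \, V(x) \leq M\}$. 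By $(V2)$, $|A_{M}(y_{i})|$ becomes uniformly small as $|y_{i}| \to \infty$; combined with the uniform $L^{p_{s}^{*}}$ bound and the choice of $M$ large followed by $R$ large, the tail estimate follows.

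Combined with the local convergence $u_{n} \to u$ in $L^{p}(B_{R})$ for every $R$, this gives $u_{n} \to u$ in $L^{p}(\R^{N})$. For $q \in (p, p_{s}^{*})$ the conclusion then follows by interpolation
\[
|u_{n} - u|_{L^{q}(\R^{N})} \leq |u_{n} - u|_{L^{p}(\R^{N})}^{\theta} \, |u_{n} - u|_{L^{p_{s}^{*}}(\R^{N})}^{1-\theta},
\]
with $\theta \in (0,1)$ determined by $\frac{1}{q} = \frac{\theta}{p} + \frac{1-\theta}{p_{s}^{*}}$, using the uniform $L^{p_{s}^{*}}$ bound on $\{u_n - u\}$.

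The main obstacle is the tail estimate: the \emph{local} character of $(V2)$ must be combined with the bounded-overlap covering in a way that controls infinitely many small ball contributions at infinity. The delicate point is to balance $M$ large (to dominate $V$ on $\{V > M\}$) against $R$ large (to force $|A_{M}(y_{i})|$ small on the covering balls) so that both error terms become simultaneously small; once this is carried out, standard local compactness and interpolation complete the argument.
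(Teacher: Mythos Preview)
Your overall strategy matches the paper's: reduce to $q=p$ by interpolation, use local compactness on bounded balls, and establish a uniform tail estimate by splitting $\{|x|>R\}$ into $\{V\geq M\}$ and $\{V<M\}$, handling the second piece via a bounded-overlap covering by balls $B_r(y_i)$ and H\"older's inequality on each ball. The place where your argument breaks down is the summation over $i$ after the per-ball H\"older estimate.

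From your inequality
\[
\int_{B_{r}(y_{i}) \cap \{V \leq M\}} |v_{n}|^{p} \, dx \leq |A_{M}(y_{i})|^{sp/N} \, |v_{n}|_{L^{p_{s}^{*}}(B_{r}(y_{i}))}^{p},
\]
pulling out $\varepsilon_R:=\sup_i |A_M(y_i)|^{sp/N}$ leaves you with the task of bounding
\[
\sum_{i}\,|v_n|^{p}_{L^{p_s^*}(B_r(y_i))}
=\sum_i\Bigl(\int_{B_r(y_i)}|v_n|^{p_s^*}\Bigr)^{p/p_s^*}.
\]
Since $p/p_s^*<1$, the map $t\mapsto t^{p/p_s^*}$ is subadditive, and this sum is \emph{not} controlled by $|v_n|^{p}_{L^{p_s^*}(\R^N)}$ via bounded overlap; in fact it can diverge even when $\sum_i\int_{B_r(y_i)}|v_n|^{p_s^*}$ is finite. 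The ``uniform $L^{p_s^*}$ bound'' you invoke therefore does not close the estimate, and a discrete H\"older on the sum (with exponents $N/(sp)$ and $p_s^*/p$) would instead require $\sum_i|A_M(y_i)|<\infty$, which $(V2)$ does not give.

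The paper avoids this by inserting the \emph{local} Sobolev embedding on each ball before summing: for some $\gamma>1$ with $p\gamma\leq p_s^*$,
\[
\Bigl(\int_{B_r(y_i)}|u_n|^{p\gamma}\Bigr)^{1/\gamma}\leq C\,\|u_n\|^{p}_{W^{s,p}(B_r(y_i))},
\]
with $C$ uniform in $i$ since all balls have the same radius. The point is that $\|u_n\|^{p}_{W^{s,p}(B_r(y_i))}$ is a genuine $p$-th power of an $L^p$-type quantity, so bounded overlap gives
\[
\sum_i\|u_n\|^{p}_{W^{s,p}(B_r(y_i))}\leq 2^{N}\|u_n\|^{p}_{W^{s,p}(\R^N)},
\]
and the tail estimate follows. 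In short, the missing idea is to pass through the local $W^{s,p}$ norm so that the summands become additive at exponent $p$; staying with $L^{p_s^*}$ norms raised to the $p$-th power does not sum.
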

\begin{proof}
Let $\{u_{n}\}\subset E$ such that $u_{n}\rightharpoonup 0$ in $E$. We have to show that $u_{n}\rightarrow 0$ in $L^{q}(\R^{N})$ for $q\in [p, p^{*}_{s})$. By the interpolation inequality we only need to consider $q=p$. By using the Theorem \ref{ce} we know that $u_{n} \rightarrow 0$ in $L^{p}_{loc}(\R^{N})$. Thus it suffices to show that, for any $\epsilon>0$, there exists $R>0$ such that 
$$
\int_{B_{R}^{c}(0)} |u_{n}|^{p} \, dx < \epsilon; 
$$
here $B_{R}^{c}(0) = \R^{N} \setminus B_{R}(0)$. \\
Let $\{y_{i}\}_{i\in \N}$ be a sequence of points in $\R^{N}$ satisfying $\R^{N}\subset \bigcup_{i\in \N} B_{r}(y_{i})$ and such that each point $x$ is contained in at most $2^{N}$ such balls $B_{r}(y_{i})$. 
We recall that we are assuming $V(x)\geq 1$ for any $x\in \R^{N}$.\\
Let 
\begin{align*}
A_{R,M} = \{x\in B_{R}^{c} : V(x) \geq M\} \quad \mbox{ and } \quad B_{R,M} = \{x\in B_{R}^{c} : V(x) < M\}. 
\end{align*}
Then 
$$
\int_{A_{R,M}} |u_{n}|^{p} \, dx \leq \frac{1}{M} \int_{\R^{N}} V(x) |u_{n}|^{p} \, dx
$$
and this can be made arbitrarily small by choosing $M$ large.\\
Take $\gamma>1$ such that $p\gamma \leq p^{*}_{s}$ and let $\gamma'= \frac{\gamma}{\gamma-1}$ be the dual exponent of $\gamma$. Then for fixed $M>0$ we have 
\begin{align*}
\int_{B_{R,M}} |u_{n}|^{p} \, dx & \leq \sum_{i\in \N} \int_{B_{R,M}\cap B_{r}(y_{i})} |u_{n}|^{p} \, dx \\
&\leq \sum_{i\in \N} \Bigl(\int_{B_{R,M}\cap B_{r}(y_{i})} |u_{n}|^{p\gamma} \, dx	\Bigr)^{\frac{1}{\gamma}} |B_{R,M}\cap B_{r}(y_{i})|^{\frac{1}{\gamma'}}\\
&\leq \epsilon_{R} \sum_{i\in \N} \Bigl(\int_{B_{R,M}\cap B_{r}(y_{i})} |u_{n}|^{p\gamma} \, dx	\Bigr)^{\frac{1}{\gamma}}\\
&\leq C \epsilon_{R} \sum_{i\in \N} ||u_{n}||^{p}_{W^{s,p}(B_{r}(y_{i}))} \\
&\leq C \epsilon_{R} 2^{N} ||u_{n}||^{p}_{W^{s,p}(\R^{N})}
\end{align*}  
where $\epsilon_{R}=\sup_{y_{i}}  |B_{R,M}\cap B_{r}(y_{i})|^{\frac{1}{\gamma'}}$. By assumption $(V1)$ we can infer that $\epsilon_{R} \rightarrow 0$ as $R \rightarrow \infty$. Then we may make this term small by choosing $R$ large.

\end{proof}

\noindent
Finally we prove the following result which will be fundamental later
\begin{lem}\label{Stype}
If $u_{n} \rightharpoonup u$ in $E$ and $\langle A(u_{n}),u_{n}-u \rangle\rightarrow 0$ then $u_{n} \rightarrow u$ in $E$.
\end{lem}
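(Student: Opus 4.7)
The plan is to exploit the $(S_{+})$-type property of the operator $A$, which for $p\geq 2$ is a direct consequence of the classical pointwise Simon inequality
$$
\bigl(|a|^{p-2}a - |b|^{p-2}b\bigr)(a-b) \geq c_{p}\,|a-b|^{p} \quad \forall\, a,b\in\R,
$$
for some constant $c_{p}>0$.

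First I would check that $A(u)\in E^{*}$: applying H\"older's inequality with exponents $p'$ and $p$ to each of the two summands defining $\langle A(u),v\rangle$, one obtains $|\langle A(u),v\rangle| \leq \|u\|^{p-1}\|v\|$. Since $u_{n}\rightharpoonup u$ in $E$ and $A(u)\in E^{*}$, it follows that $\langle A(u), u_{n}-u\rangle\to 0$. Combined with the hypothesis $\langle A(u_{n}), u_{n}-u\rangle\to 0$, this yields
$$
\langle A(u_{n}) - A(u), u_{n}-u\rangle \to 0.
$$

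Second, I would rewrite $\langle A(u_{n}) - A(u), u_{n}-u\rangle$ as the sum $I_{1}+I_{2}$, where
$$
I_{1}=\iint_{\R^{2N}} \frac{\bigl[|\phi_{n}|^{p-2}\phi_{n} - |\phi|^{p-2}\phi\bigr](\phi_{n}-\phi)}{|x-y|^{N+sp}}\,dx\,dy,
$$
with $\phi_{n}(x,y):=u_{n}(x)-u_{n}(y)$ and $\phi(x,y):=u(x)-u(y)$, and
$$
I_{2}=\int_{\R^{N}} V(x)\bigl[|u_{n}|^{p-2}u_{n}-|u|^{p-2}u\bigr](u_{n}-u)\,dx.
$$
Applying the Simon inequality pointwise under the integral signs (with $a=\phi_{n}(x,y)$, $b=\phi(x,y)$ in $I_{1}$ and $a=u_{n}(x)$, $b=u(x)$ in $I_{2}$) and recalling that $V\geq 1$, I obtain
$$
I_{1} \geq c_{p}\, [u_{n}-u]_{W^{s,p}(\R^{N})}^{p}, \qquad I_{2} \geq c_{p}\, |u_{n}-u|_{V}^{p},
$$
so that $\langle A(u_{n})-A(u), u_{n}-u\rangle \geq c_{p}\,\|u_{n}-u\|^{p}$.

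Combining the last inequality with the convergence to zero established in the first step gives $\|u_{n}-u\|\to 0$, which is exactly the conclusion. The argument is essentially mechanical thanks to the assumption $p\geq 2$; the only real check is the pointwise applicability of Simon's inequality, which in the range $1<p<2$ would require the more delicate variant followed by a H\"older argument to recover an $L^{p}$-bound on the difference.
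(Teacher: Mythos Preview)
Your proof is correct and follows a genuinely different route from the paper's. You establish the $(S_{+})$-property of $A$ directly: from Simon's pointwise inequality for $p\geq 2$ you get the quantitative bound $\langle A(u_{n})-A(u),u_{n}-u\rangle\geq c_{p}\|u_{n}-u\|^{p}$, and the conclusion is immediate. The paper instead proceeds in two steps: first it shows that $\langle A(u_{n})-A(u),u_{n}-u\rangle\to 0$ forces $\|u_{n}\|\to\|u\|$ (via the elementary algebraic inequality $(\|u_{n}\|^{p-1}-\|u\|^{p-1})(\|u_{n}\|-\|u\|)\leq \langle A(u_{n})-A(u),u_{n}-u\rangle$), and then it proves that $E$ is uniformly convex using Clarkson's first inequality, so that weak convergence together with norm convergence yields strong convergence. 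Your argument is shorter and delivers an explicit rate, while the paper's approach has the side benefit of recording the uniform convexity of $E$ as a structural fact about the space; both arguments rely in an essential way on the hypothesis $p\geq 2$ (Simon's inequality in your case, Clarkson's first inequality in the paper's). One minor remark: in your estimate of $I_{2}$ you only need $V\geq 0$, not $V\geq 1$.
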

\begin{proof}
Firstly, let us observe that for any $u, v\in E$
$$
\Bigl|\langle A(u), v \rangle \Bigr|\leq [u]_{W^{s,p}(\R^{N})}^{p-1}[v]_{W^{s,p}(\R^{N})}+|v|_{V}^{p-1}|v|_{V}\leq ||u||^{p-1}||v||.
$$
Then, elementary calculations yield
\begin{align}\label{TV}
0&\leq (||u_{n}||^{p-1}-||u||^{p-1})(||u_{n}||-||u||) \nonumber\\
&\leq ||u_{n}||^{p}-\langle A(u_{n}), u \rangle-\langle A(u), u_{n} \rangle+ ||u||^{p} \nonumber\\
&= \langle A(u_{n}), u_{n} \rangle-\langle A(u_{n}), u \rangle-\langle A(u), u_{n} \rangle+\langle A(u), u \rangle \nonumber\\
&=\langle A(u_{n}), u_{n}-u \rangle-\langle A(u), u_{n}-u \rangle=:I_{n}+II_{n}.
\end{align}
By the hypotheses of lemma follows that $I_{n} , II_{n}\rightarrow 0$ as $n \rightarrow \infty$
so, in view of (\ref{TV}), we have $||u_{n}||\rightarrow ||u||$ as $n \rightarrow \infty$. Since it is well known that the weak convergence and the norm convergence in a uniformly convex space imply the strong convergence, to conclude the proof it will be enough to prove that $E$ is uniformly convex.

\noindent
Fix $\varepsilon\in (0, 2)$ and let $u, v\in E$ such that $||u||, ||v||\leq 1$ and $||u-v||> \varepsilon$.\\
By using the fact that 
$$
\left|\frac{a+b}{2}\right|^{p}+\left|\frac{a-b}{2}\right|^{p}\leq \frac{1}{2}(|a|^{p}+|b|^{p}) \mbox{ for any } a, b\in \R,
$$
follows that
\begin{align*}
&\left|\left|\frac{u+v}{2}\right|\right|^{p}+\left|\left|\frac{u-v}{2}\right|\right|^{p}\\
&=\left\{\left[\frac{u+v}{2}\right]^{p}_{W^{s,p}(\R^{N})}+\left[\frac{u-v}{2}\right]^{p}_{W^{s,p}(\R^{N})}+\left|\frac{u+v}{2}\right|^{p}_{V}+\left|\frac{u-v}{2}\right|^{p}_{V}\right\}\\
&\leq \frac{1}{2} \left([u]^{p}_{W^{s,p}(\R^{N})}+[v]^{p}_{W^{s,p}(\R^{N})}+|u|^{p}_{V}+|v|^{p}_{V}\right) \\
&=\frac{1}{2}(||u||^{p}+||v||^{p})=1,
\end{align*}
which gives that $\displaystyle{\left|\left|\frac{u+v}{2}\right|\right|^{p}< 1-\frac{\varepsilon^{p}}{2^{p}}}$. \\
Choosing 
$$
\delta=1-\Bigl[1-\Bigl(\frac{\varepsilon}{2}\Bigr)^{p} \Bigr]^{\frac{1}{p}}>0,
$$
we can infer that $||\frac{u+v}{2}||< 1-\delta$. Then $E$ is uniformly convex.
\end{proof}

\noindent
Let us introduce the following family of functional $\J_{\lambda}: E \rightarrow \R$ defined by
$$
\J_{\lambda}(u)=\frac{1}{p} \langle A(u),u \rangle-\lambda B(u), \quad \lambda \in [1, 2].
$$
After integrating, we obtain from $(f1)$ that for any $(x,t)\in \R^{N}\times \R$
\begin{equation}\label{F}
|F(x,t)|\leq \frac{c_{1}}{p}|t|^{p}+\frac{c_{1}}{\nu}|t|^{\nu}\leq c_{1}(|t|^{p}+|t|^{\nu}).
\end{equation}
By using $(V1)$-$(V2)$, (\ref{F}) and Lemma \ref{T1} follows that $\J_{\lambda}$ is well defined on $E$.
Moreover $\J_{\lambda}\in C^{1}(E, \R)$, and
\begin{equation}\label{derivative}
\J_{\lambda}'(u)v= \langle A(u), v \rangle-\lambda  \langle B'(u),v \rangle
\end{equation}
where 
$$
\langle B'(u),v \rangle=\int_{\R^{N}} f(x,u) v dx.
$$
Then the critical points of $\J_{1}=\J$ are weak solutions to (\ref{P}).\\
In order to apply the Theorem \ref{FT}, we can observe that $E$ is a separable ($C_{0}^{\infty}(\R^{N})$ is separable and dense in $W^{s,p}(\R^{N})$) and reflexive Banach space, so there exist $(\phi_{n})\subset E$ and   $(\phi^{*}_{n})\subset E^{*}$ such that $E=\overline{span\{\phi_{n}: n\in \N\}}$, $E^{*}=\overline{span\{\phi^{*}_{n}: n\in \N\}}$
and $\langle \phi^{*}_{n}, \phi_{m} \rangle=1$ if $n=m$ and zero otherwise.\\
Then, for any $n\in \N$, we set $X_{n}=span\{\phi_{n}\}$, $Y_{n}=\oplus_{j=1}^{n} X_{j}$ and $Z_{n}=\overline{\oplus_{j=n}^{\infty} X_{j}}$.

\section{Proof of Theorem \ref{thm1}}
\noindent
In this section we give the proof of the main result of this paper.

\noindent
Firstly we prove the following Lemmas:
\begin{lem}\label{lem1}
Assume that $V$ satisfies $(V1)$-$(V2)$ and that $f$ satisfies $(f1)$. Then there exists $k_{1}\in \N$ and a sequence $\rho_{k}\rightarrow \infty$ as $k \rightarrow \infty$ such that 
$$
\alpha_{k}(\lambda)=\inf_{u\in Z_{k}, ||u||=\rho_{k}} \J_{\lambda}(u), \quad \forall k\geq k_{1}.
$$
\end{lem}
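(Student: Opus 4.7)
The plan is to obtain the standard fountain-type lower bound by exploiting the compact embedding $E\subset L^{q}(\R^N)$ for $q\in[p,p^{*}_{s})$ from Lemma~\ref{T1}, together with the polynomial growth of $F$ provided by $(f1)$. (I read the statement as claiming in addition that $\alpha_k(\lambda)\to\infty$ as $k\to\infty$, uniformly in $\lambda\in[1,2]$; this is what the fountain Theorem actually needs.)

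First, for each $q\in[p,p^{*}_{s})$ I would introduce
\[
\beta_{k}(q)=\sup_{u\in Z_{k},\,\|u\|=1}|u|_{L^{q}(\R^{N})},
\]
and prove that $\beta_{k}(q)\to 0$ as $k\to\infty$. Since $\beta_{k}(q)$ is monotone decreasing in $k$, it has a limit $\beta\geq 0$; picking $u_{k}\in Z_{k}$ with $\|u_{k}\|=1$ and $|u_{k}|_{L^{q}}\geq \beta_{k}(q)/2$, the choice of $Z_{k}=\overline{\oplus_{j\geq k}X_{j}}$ and the biorthogonality $\langle\phi^{*}_{n},\phi_{m}\rangle=\delta_{nm}$ force $u_{k}\rightharpoonup 0$ in $E$. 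The compact embedding $E\hookrightarrow L^{q}(\R^{N})$ from Lemma~\ref{T1} then gives $u_{k}\to 0$ in $L^{q}$, so $\beta=0$. This is the main technical point; the rest is just calibration.

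Next, I would estimate $\J_{\lambda}$ on spheres in $Z_{k}$. From $(f1)$ one has $|F(x,t)|\leq c_{1}(|t|^{p}+|t|^{\nu})$, hence for $u\in Z_{k}$ with $\|u\|=\rho$ and any $\lambda\in[1,2]$,
\[
\J_{\lambda}(u)\geq \frac{1}{p}\|u\|^{p}-2c_{1}\bigl(|u|^{p}_{L^{p}}+|u|^{\nu}_{L^{\nu}}\bigr)\geq \frac{1}{p}\rho^{p}-2c_{1}\beta_{k}(p)^{p}\rho^{p}-2c_{1}\beta_{k}(\nu)^{\nu}\rho^{\nu}.
\]
Choose $k_{1}\in\N$ so that $2c_{1}\beta_{k}(p)^{p}\leq \frac{1}{2p}$ for all $k\geq k_{1}$; then
\[
\J_{\lambda}(u)\geq \frac{1}{2p}\rho^{p}-2c_{1}\beta_{k}(\nu)^{\nu}\rho^{\nu}.
\]

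Finally, since $\nu>p$, the right-hand side is maximized in $\rho$ at
\[
\rho_{k}:=\bigl(4 c_{1} \nu \beta_{k}(\nu)^{\nu}\bigr)^{-\frac{1}{\nu-p}},
\]
which tends to $\infty$ because $\beta_{k}(\nu)\to 0$. Plugging $\rho_{k}$ back in yields $\alpha_{k}(\lambda)\geq c\,\rho_{k}^{p}\to\infty$, uniformly in $\lambda\in[1,2]$. The inequality $r_{k}>\rho_{k}$ required in Theorem~\ref{FT}(iii) will then be ensured when $r_{k}$ is selected in the companion lemma (which handles bounds on $Y_{k}$). The main obstacle is really the first step — proving $\beta_{k}(q)\to 0$ — but with Lemma~\ref{T1} in hand it reduces to a short weak-to-strong convergence argument along a bounded sequence in $Z_{k}$.
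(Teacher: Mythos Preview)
Your proposal is correct and follows essentially the same route as the paper: the paper also defines $b_{q}(k)=\sup_{u\in Z_{k},\,\|u\|=1}|u|_{L^{q}}$ for $q=p,\nu$, proves $b_{q}(k)\to 0$ via the same weak-to-strong argument (biorthogonality forces $u_{k}\rightharpoonup 0$, then Lemma~\ref{T1} gives $u_{k}\to 0$ in $L^{q}$), obtains the identical lower bound $\J_{\lambda}(u)\geq \tfrac{1}{p}\|u\|^{p}-2c_{1}b_{p}(k)^{p}\|u\|^{p}-2c_{1}b_{\nu}(k)^{\nu}\|u\|^{\nu}$, chooses $k_{1}$ so that $2c_{1}b_{p}(k)^{p}\leq \tfrac{1}{2p}$, and then sets $\rho_{k}=(8pc_{1}b_{\nu}(k)^{\nu})^{1/(p-\nu)}$ to conclude $\alpha_{k}(\lambda)\geq \tfrac{1}{4p}\rho_{k}^{p}\to\infty$. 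The only (immaterial) difference is that you pick $\rho_{k}$ by optimizing the lower bound in $\rho$, whereas the paper just selects a convenient value; both yield $\rho_{k}\to\infty$ and a lower bound of order $\rho_{k}^{p}$.
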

\begin{proof}
Let us define
$$
b_{p}(k)=\sup_{u\in Z_{k}, ||u||=1} |u|_{L^{p}(\R^{N})}
$$
and
$$
b_{\nu}(k)=\sup_{u\in Z_{k}, ||u||=1} |u|_{L^{\nu}(\R^{N})}.
$$
We aim to prove that 
\begin{equation}\label{bp}
b_{p}(k)\rightarrow 0 \mbox{ and } b_{\nu}(k)\rightarrow 0 \mbox{ as } k\rightarrow \infty.
\end{equation}
It is clear that $b_{p}(k)$ and $b_{\nu}(k)$ are decreasing with respect to $k$ so there exist $b_{p}, b_{\nu} \geq 0$ such that
$b_{p}(k) \rightarrow b_{p}$ and  $b_{\nu}(k) \rightarrow b_{\nu}$ as $k\rightarrow \infty$.
For any $k\geq 0$, there exists $u_{k}\in Z_{k}$ such that $||u_{k}||=1$ and $|u_{k}|_{p}\geq \frac{b_{p}(k)}{2}$.\\
Taking into account that $E$ is reflexive, we can assume that $u_{k} \rightharpoonup u$ in $E$.
Now, for any $\phi^{*}_{n}\in \{\phi^{*}_{j}\}_{j\in \N}$, we can see that $\langle \phi^{*}_{n}, u_{k} \rangle=0$ for $k>n$, so $\langle \phi^{*}_{n}, u\rangle=\lim_{k \rightarrow \infty} \langle \phi^{*}_{n}, u_{k} \rangle= 0$. Then $\langle \phi^{*}_{n}, u\rangle=0$ for any $\phi^{*}_{n}\in \{\phi^{*}_{j}\}_{j\in \N}$, which gives $u=0$.
Since $E$ is compactly embedded in $L^{p}(\R^{N})$ by Lemma \ref{T1}, we have 
$u_{k}\rightarrow 0$ in $L^{p}(\R^{N})$, which implies that $b_{p}=0$. Similarly we can prove  $b_{\nu}=0$.

\noindent
Then, for any $u\in Z_{k}$ and $\lambda \in [1, 2]$, we can see that 
\begin{align*}
\J_{\lambda}(u)&=\frac{1}{p} \langle A(u), u \rangle -\lambda B(u)\\
&\geq \frac{||u||^{p}}{p}-2\int_{\R^{N}} F(x,u) dx \\
&\geq \frac{||u||^{p}}{p}-2c_{1}(|u|^{p}_{L^{p}(\R^{N})}+|u|_{L^{\nu}(\R^{N})}^{\nu}) \\
&\geq \frac{||u||^{p}}{p}-2c_{1}(b_{p}^{p}(k)||u||^{p}+b_{\nu}^{\nu}(k)||u||^{\nu}).
\end{align*}
By using (\ref{bp}), we can find $k_{1}\in \N$ such that
$$
2c_{1}b_{p}^{p}(k)\leq \frac{1}{2p} \quad \forall k\geq k_{1}.
$$
For each $k\geq k_{1}$, we choose
$$
\rho_{k}:=(8pc_{1}b_{\nu}^{\nu}(k))^{\frac{1}{p-\nu}}.
$$
Let us note that 
\begin{equation}\label{defrho}
\rho_{k}\rightarrow \infty \mbox{ as } k\rightarrow \infty,
\end{equation}
since $\nu>p$.
Then we deduce that 
$$
\alpha_{k}(\lambda):=\inf_{u\in Z_{k}, ||u||=\rho_{k}} \J_{\lambda}(u)\geq \frac{1}{4p}\rho_{k}^{p}>0
$$
for any $k\geq k_{1}$.
\end{proof}

\begin{lem}\label{lem2}
Assume that $(V1), (V2), (f1)$ and $(f2)$ hold. Then for the positive integer $k_{1}$ and the sequence $\rho_{k}$ obtained in Lemma \ref{lem1}, there exists $r_{k}>\rho_{k}$ for any $k\geq k_{1}$ such that
$$
\beta_{k}(\lambda)=\max_{u\in Y_{k}, ||u||=r_{k}} \J_{\lambda}(u)<0.
$$
\end{lem}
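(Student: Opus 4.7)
The plan is to use the $p$-superlinearity from $(f2)$ together with the fact that $Y_{k}$ is finite dimensional. Since $\lambda \geq 1$ and $F \geq 0$, for every $u \in Y_{k}$ and $\lambda \in [1,2]$,
$$
\J_{\lambda}(u) \leq \frac{||u||^{p}}{p} - \int_{\R^{N}} F(x,u)\, dx.
$$
Thus it suffices to show that $\int_{\R^{N}} F(x,u)\, dx / ||u||^{p}$ diverges to $+\infty$ uniformly as $||u|| \to \infty$ inside $Y_{k}$. Once this is available, choosing $r_{k}>\rho_{k}$ large enough so that this quotient exceeds $2/p$ on the sphere $\{u\in Y_{k} : ||u||=r_{k}\}$ yields $\J_{\lambda}(u) \leq -r_{k}^{p}/p < 0$, which is exactly what is required.

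The core claim I need is: for every sequence $\{u_{n}\} \subset Y_{k}$ with $||u_{n}|| \to \infty$, one has $\int_{\R^{N}} F(x,u_{n})\, dx / ||u_{n}||^{p} \to +\infty$. To prove it, set $v_{n}=u_{n}/||u_{n}||$, so $||v_{n}||=1$. Since $Y_{k}$ is finite dimensional, along a subsequence $v_{n}\to v$ in $Y_{k}$ with $||v||=1$, hence $v\not\equiv 0$. By Lemma~\ref{T1} (together with the equivalence of all norms on the finite dimensional space $Y_{k}$), $v_{n} \to v$ in $L^{p}(\R^{N})$, and along a further subsequence $v_{n}\to v$ a.e.\ in $\R^{N}$. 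On the measurable set $\Omega = \{x \in \R^{N} : v(x) \neq 0\}$, which has positive measure, $|u_{n}(x)| = ||u_{n}||\,|v_{n}(x)| \to \infty$ a.e., so from $(f2)$
$$
\frac{F(x,u_{n}(x))}{||u_{n}||^{p}} = \frac{F(x,u_{n}(x))}{|u_{n}(x)|^{p}}\,|v_{n}(x)|^{p} \to +\infty \quad \mbox{a.e. in } \Omega.
$$
Since $F\geq 0$, Fatou's lemma gives $\liminf_{n} \int_{\R^{N}} F(x,u_{n})\, dx / ||u_{n}||^{p} = +\infty$, and running the same extraction inside any chosen subsequence upgrades the $\liminf$ to a genuine limit along the full sequence.

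The main obstacle is precisely this uniform-in-direction estimate: one cannot invoke $(f2)$ pointwise directly because different $u\in Y_{k}$ can vanish on different sets, so some compactness is needed. The finite dimensionality of $Y_{k}$ (producing a definite limiting direction $v\neq 0$) together with the compactness embedding of Lemma~\ref{T1} and Fatou's lemma is what makes the superlinearity usable here. Once the claim is in hand, for each $k \geq k_{1}$ I just pick $r_{k}$ larger than both $\rho_{k}$ and the threshold guaranteeing $\int_{\R^{N}} F(x,u)\, dx \geq (2/p)||u||^{p}$ on the sphere $||u|| = r_{k}$, and the conclusion $\beta_{k}(\lambda) < 0$ follows uniformly for every $\lambda \in [1,2]$.
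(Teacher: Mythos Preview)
Your proof is correct and takes a somewhat different route from the paper's. The paper first proves a uniform measure estimate on the finite dimensional space $Y_{k}$: there exists $\delta_{k}>0$ such that $|\{x:|u(x)|\geq \delta_{k}\|u\|\}|\geq \delta_{k}$ for every nonzero $u\in Y_{k}$ (established by a contradiction argument with $v_{n}=u_{n}/\|u_{n}\|$). Combining this with the uniform superlinearity in $(f2)$ yields the explicit bound $\J_{\lambda}(u)\leq -\frac{p-1}{p}\|u\|^{p}$ for all $u\in Y_{k}$ with $\|u\|\geq R_{k}/\delta_{k}$, and one then takes $r_{k}>\max\{\rho_{k},R_{k}/\delta_{k}\}$. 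You bypass this intermediate measure lemma and go straight to the integral via Fatou's lemma along sequences --- which is precisely the device the paper deploys later in Lemma~\ref{claim2} (the case $w\not\equiv 0$), so you are in effect borrowing that argument forward. The paper's two-step approach buys an explicit quantitative decay rate for $\J_{\lambda}$ on $Y_{k}$; your one-step approach is shorter and avoids the auxiliary measure estimate, at the price of producing the threshold $r_{k}$ only implicitly through the sequence-to-uniform passage.
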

\begin{proof}
Firstly we prove that for any finite dimensional subspace $F \subset E$ there exists a constant $\delta>0$ such that
\begin{equation}\label{finiteineq}
|\{x\in \R^{N}: |u(x)|\geq \delta ||u||\}| \geq \delta, \quad \forall u\in F\setminus\{0\}.
\end{equation}
We argue by contradiction and we suppose that for any $n\in \N$ there exists $0\neq u_{n}\in F$ such that
$$
\Bigl|\Bigl\{x\in \R^{N}: |u_{n}(x)|\geq \frac{1}{n} ||u||\Bigr\}\Bigr|<\frac{1}{n}, \quad \forall n\in \N.
$$
Let $v_{n}:=\frac{u_{n}}{||u_{n}||}\in F$ for all $n\in \N$. Then $||v_{n}||=1$ for all $n\in \N$ and 
\begin{equation}\label{2.20}
\Bigl|\Bigl\{x\in \R^{N}: |v_{n}(x)|\geq \frac{1}{n} \Bigr\}\Bigr|<\frac{1}{n}, \quad \forall n\in \N.
\end{equation}
Up to a subsequence, we may assume that $v_{n} \rightarrow v$ in $E$ for some $v\in F$ since $F$ is a finite dimensional space. Clearly $||v||=1$. By using Lemma \ref{T1} and the fact that all norms are equivalent on $F$, we deduce that
\begin{equation}\label{2.21}
|v_{n}-v|_{L^{p}(\R^{N})}\rightarrow 0 \mbox{ as } n \rightarrow \infty.
\end{equation}
Since $v\neq 0$, there exists $\delta_{0}>0$ such that
\begin{equation}\label{2.22}
|\{x\in \R^{N}: |v(x)|\geq \delta_{0} \}|\geq \delta_{0}.
\end{equation}
Set
$$
\Lambda_{0}:=\{x\in \R^{N}: |v(x)|\geq \delta_{0} \}
$$
and for all  $n\in \N$,
$$
\Lambda_{n}:=\Bigl\{x\in \R^{N}: |v_{n}(x)|\geq \frac{1}{n} \Bigr\} \quad \Lambda^{c}_{n}:=\R^{N}\setminus \Lambda_{n}.
$$
Taking into account (\ref{2.20}) and (\ref{2.22}), we get
$$
|\Lambda_{n}\cap \Lambda_{0}|\geq |\Lambda_{0}|-|\Lambda^{c}_{n}|\geq \delta_{0}-\frac{1}{n}\geq \frac{\delta_{0}}{2}.
$$
for $n$ large enough.

\noindent
Therefore we obtain
\begin{align*}
\int_{\R^{N}} |v_{n}-v|^{p} dx &\geq \int_{\Lambda_{n}\cap \Lambda_{0}} |v_{n}-v|^{p} dx\\
&\geq \int_{\Lambda_{n}\cap \Lambda_{0}} (|v|^{p}-|v_{n}|^{p}) dx\\
&\geq \Bigl(\delta_{0}-\frac{1}{n}\Bigr)^{p}|\Lambda_{n}\cap \Lambda_{0}| \\
& \geq \Bigl(\frac{\delta_{0}}{2}\Bigr)^{p+1}>0
\end{align*}
which contradicts (\ref{2.21}).\\
Now, by using the fact that $Y_{k}$ is finite dimensional and (\ref{finiteineq}), we can find $\delta_{k}>0$ such that 
\begin{equation}\label{ter}
|\{x\in \R^{N}: |u(x)|\geq \delta_{k} ||u||\}| \geq \delta_{k}, \quad \forall u\in Y_{k}\setminus\{0\}.
\end{equation}
By $(f2)$, for any $k\in \N$ there exists a constant $R_{k}>0$ such that 
$$
F(x,u)\geq \frac{|u|^{p}}{\delta^{p+1}_{k}} \quad \forall x\in \R^{N} \mbox{ and } |u|\geq R_{k}.
$$
Set 
$$
A_{u}^{k}=\{x\in \R^{N}: |u(x)|\geq \delta_{k}||u|| \}
$$
and let us observe that, by (\ref{ter}), $|A_{u}^{k}|\geq \delta_{k}$ for any $u\in Y_{k}\setminus\{0\}$.

\noindent
Then for any $u\in Y_{k}$ such that $||u||\geq \frac{R_{k}}{\delta_{k}}$, we have
\begin{align*}
\J_{\lambda}(u)&\leq \frac{1}{p}||u||^{p}-\int_{\R^{N}} F(x,u) dx \\
&\leq \frac{1}{p}||u||^{p}-\int_{A_{u}^{k}} \frac{|u|^{p}}{\delta^{p+1}_{k}} dx \\
&\leq  \frac{1}{p}||u||^{p}-||u||^{p}=-\Bigl(\frac{p-1}{p}\Bigr)||u||^{p}.
\end{align*}
Choosing $r_{k}>\max\{\rho_{k}, \frac{R_{k}}{\delta_{k}}\}$ for all $k\geq k_{1}$, follows that
$$
\beta_{k}(\lambda)=\max_{u\in Y_{k}, ||u||=r_{k}} \J_{\lambda}(u)\leq -\Bigl(\frac{p-1}{p}\Bigr)r_{k}^{p}<0, \forall k\geq k_{1}.
$$
\end{proof}

\noindent
By using (\ref{F}) and Lemma \ref{T1} we can see that $\J_{\lambda}$ maps bounded sets to bounded sets uniformly for $\lambda \in [1,2]$. Moreover, by $(f4)$, $\J_{\lambda}$ is even. Then the condition $(i)$ in Theorem \ref{FT} is satisfied.
The condition $(ii)$ is clearly true, while $(iii)$ follows by Lemma \ref{lem1} and Lemma \ref{lem2}.\\
Then, by Theorem \ref{FT},  for any $k\geq k_{1}$ and $\lambda \in [1,2]$ there exists a sequence $\{u^{k}_{m}(\lambda)\}\subset E$ such that
\begin{align*}
\sup_{m\in \N} ||u^{k}_{m}(\lambda)||<\infty, \J'_{\lambda}(u^{k}_{m}(\lambda))\rightarrow 0 \mbox{ and } \J_{\lambda}(u^{k}_{m}(\lambda))\rightarrow \xi_{k}(\lambda) \mbox{ as } m \rightarrow \infty
\end{align*}
where
$$
\xi_{k}(\lambda)=\inf_{\gamma \in \Gamma_{k}} \max_{u\in B_{k}} \J_{\lambda}(\gamma(u))
$$
with
$$
B_{k}=\{u\in Y_{k}: ||u||\leq r_{k} \}
\mbox{ and }
\Gamma_{k}=\{\gamma \in C(B_{k}, X): \gamma \mbox{ is odd }, \gamma=Id \mbox{ on } \partial B_{k} \}.
$$
In particular, from the proof of Lemma \ref{lem1}, we deduce that for any $k \geq k_{1}$ and $\lambda \in [1, 2]$
\begin{equation}\label{2.27}
\frac{1}{4p}\rho_{k}^{p}=:c_{k}\leq\xi_{k}(\lambda)\leq d_{k}:=\max_{u\in B_{k}} \J_{1}(u),
\end{equation}
and $c_{k}\rightarrow \infty$ as $k \rightarrow \infty$ by (\ref{defrho}).
As a consequence, for any $k \geq k_{1}$, we can choose $\lambda_{n} \rightarrow 1$ (depending on $k$) and get the corresponding sequences satisfying  
\begin{align}\label{2.28}
\sup_{m\in \N} ||u^{k}_{m}(\lambda_{n})||<\infty, \J'_{\lambda_{n}}(u^{k}_{m}(\lambda_{n}))\rightarrow 0 \mbox{ as } m \rightarrow \infty.
\end{align}
Now, we prove that for any $k\geq k_{1}$, $\{u^{k}_{m}(\lambda_{n})\}_{m\in \N}$ admits a strongly convergent subsequence $\{u^{k}_{n}\}_{n\in \N}$, and that such subsequence is bounded.
\begin{lem}\label{claim1}
For each $\lambda_{n}$ given above, the sequence $\{u^{k}_{m}(\lambda_{n})\}_{m\in \N}$ has a strong convergent subsequence.
\begin{proof}
By (\ref{2.28}) we may assume, without loss of generality, that as $m \rightarrow \infty$
$$
u^{k}_{m}(\lambda_{n})\rightharpoonup u^{k}_{n} \mbox{ in } E
$$
for some $u^{k}_{n} \in E$.
By Lemma \ref{T1} we have
\begin{equation}\label{conv}
u^{k}_{m}(\lambda_{n})\rightarrow u^{k}_{n} \mbox{ in } L^{p}(\R^{N}) \cap L^{\nu}(\R^{N}). 
\end{equation}
By $(f1)$  and H\"older inequality follows that
\begin{align*}
&\Bigl|\int_{\R^{N}} f(x, u^{k}_{m}(\lambda_{n})) (u^{k}_{m}(\lambda_{n})- u^{k}_{n}) dx \Bigr|\leq \\
&\leq c_{1}|u^{k}_{m}(\lambda_{n})|_{p}^{p-1} |u^{k}_{m}(\lambda_{n})- u^{k}_{n}|_{p}+c_{1}
|u^{k}_{m}(\lambda_{n})|_{\nu}^{\nu-1} |u^{k}_{m}(\lambda_{n})- u^{k}_{n}|_{\nu}
\end{align*}
so, by using (\ref{conv}), we get
\begin{align*}
\lim_{m \rightarrow \infty} \int_{\R^{N}} f(x, u^{k}_{m}(\lambda_{n})) (u^{k}_{m}(\lambda_{n})- u^{k}_{n}) dx=0.
\end{align*}
Since 
$$
 \J'_{\lambda_{n}}(u^{k}_{m}(\lambda_{n})) \rightarrow 0 \mbox{ as } m \rightarrow \infty
$$
and $\langle \J_{\lambda}'(u),v \rangle=\langle A(u),v \rangle-\lambda \langle B'(u),v \rangle$, we deduce that
$$
\langle A(u^{k}_{m}(\lambda_{n})), u^{k}_{m}(\lambda_{n})- u^{k}_{n} \rangle \rightarrow 0 \mbox{ as } m\rightarrow \infty.
$$
Then, by using Lemma \ref{Stype}, we can infer that
$$
u^{k}_{m}(\lambda_{n})\rightarrow u^{k}_{n} \mbox{ in } E \mbox{ as } m\rightarrow \infty.
$$
\end{proof}
\end{lem}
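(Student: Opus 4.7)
The plan is to run a standard extraction argument, leveraging the reflexivity of $E$, the compact embedding from Lemma \ref{T1}, and the $(S_+)$-type property encoded in Lemma \ref{Stype}. First, since by (\ref{2.28}) the sequence $\{u^{k}_{m}(\lambda_{n})\}_{m\in \N}$ is bounded in the reflexive space $E$, I would pass to a weakly convergent subsequence $u^{k}_{m}(\lambda_{n})\rightharpoonup u^{k}_{n}$ in $E$ for some limit $u^{k}_{n}\in E$. Combining this with Lemma \ref{T1} immediately yields strong convergence $u^{k}_{m}(\lambda_{n})\to u^{k}_{n}$ in $L^{q}(\R^{N})$ for every $q\in[p,p^{*}_{s})$, in particular for $q=p$ and $q=\nu$.

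The next step is to control the nonlinear part. Using the growth estimate $(f1)$, I would apply H\"older's inequality to bound
\[
\Bigl|\int_{\R^{N}} f(x,u^{k}_{m}(\lambda_{n}))\bigl(u^{k}_{m}(\lambda_{n})-u^{k}_{n}\bigr)\,dx\Bigr|
\]
in terms of $|u^{k}_{m}(\lambda_{n})|_{p}^{p-1}|u^{k}_{m}(\lambda_{n})-u^{k}_{n}|_{p}$ and $|u^{k}_{m}(\lambda_{n})|_{\nu}^{\nu-1}|u^{k}_{m}(\lambda_{n})-u^{k}_{n}|_{\nu}$. The first factors remain bounded (the $L^{p}$- and $L^{\nu}$-norms are controlled by $\|u^{k}_{m}(\lambda_{n})\|$ through Lemma \ref{T1}), while the second factors vanish in the limit by the previous step; hence $\langle B'(u^{k}_{m}(\lambda_{n})),u^{k}_{m}(\lambda_{n})-u^{k}_{n}\rangle\to 0$ as $m\to\infty$.

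Since $\J'_{\lambda_{n}}(u^{k}_{m}(\lambda_{n}))\to 0$ in $E^{*}$ and $u^{k}_{m}(\lambda_{n})-u^{k}_{n}$ is bounded in $E$, the duality pairing $\langle \J'_{\lambda_{n}}(u^{k}_{m}(\lambda_{n})),u^{k}_{m}(\lambda_{n})-u^{k}_{n}\rangle$ also tends to zero. Combining this with the identity
\[
\langle \J'_{\lambda_{n}}(u),v\rangle=\langle A(u),v\rangle-\lambda_{n}\langle B'(u),v\rangle
\]
from (\ref{derivative}) and the vanishing of the $B'$-pairing established above, I conclude that $\langle A(u^{k}_{m}(\lambda_{n})),u^{k}_{m}(\lambda_{n})-u^{k}_{n}\rangle\to 0$. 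At this point Lemma \ref{Stype} applies and delivers the desired strong convergence $u^{k}_{m}(\lambda_{n})\to u^{k}_{n}$ in $E$.

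The only genuinely delicate step is the second one: one has to make sure the growth conditions in $(f1)$ and the compact embedding in Lemma \ref{T1} match up, i.e.\ that $\nu$ lies strictly below $p^{*}_{s}$ so that $L^{\nu}$-compactness is available. This is guaranteed exactly by the hypothesis $p<\nu<p^{*}_{s}$ in $(f1)$, which is the reason these bounds were imposed from the start; once that is observed, the rest of the argument is essentially a bookkeeping exercise.
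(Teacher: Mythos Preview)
Your proposal is correct and follows essentially the same route as the paper's own proof: extract a weak limit via reflexivity and (\ref{2.28}), upgrade to $L^{p}\cap L^{\nu}$ convergence through Lemma~\ref{T1}, kill the $B'$-term with $(f1)$ and H\"older, and then feed $\langle A(u^{k}_{m}(\lambda_{n})),u^{k}_{m}(\lambda_{n})-u^{k}_{n}\rangle\to 0$ into Lemma~\ref{Stype}. If anything, your write-up is slightly more explicit about why the pairing $\langle \J'_{\lambda_{n}}(u^{k}_{m}(\lambda_{n})),u^{k}_{m}(\lambda_{n})-u^{k}_{n}\rangle$ vanishes and about the role of the constraint $\nu<p^{*}_{s}$.
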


\noindent
Therefore, without loss of generality, we may assume that
$$
\lim_{m \rightarrow \infty} u^{k}_{m}(\lambda_{n})=u^{k}_{n}, \quad \forall n\in \N, k\geq k_{1}.
$$
As a consequence, we obtain
\begin{equation}\label{2.32}
\J'_{\lambda_{n}}(u^{k}_{n})= 0,  \J_{\lambda_{n}}(u^{k}_{n})\in [c_{k}, d_{k}], \quad \forall n\in \N, k \geq k_{1}.
\end{equation}
\begin{lem}\label{claim2}
For any $k\geq k_{1}$, the sequence $\{u^{k}_{n}\}_{n\in \N}$ is bounded.
\begin{proof}
For simplicity we set $u_{n}=u^{k}_{n}$.
We suppose by contradiction that, up to a subsequence
\begin{equation}\label{2.33} 
||u_{n}||\rightarrow \infty  \mbox{ as } n\rightarrow \infty.
\end{equation}
Let $w_{n}=\frac{u_{n}}{||u_{n}||}$ for any $n\in \N$. 
Then, up to subsequence, we may assume that
 \begin{align}\label{2.34}
&w_{n}\rightharpoonup w \mbox{ in } E \nonumber \\
&w_{n}\rightarrow w \mbox{ in } L^{p}(\R^{N})\cap L^{\nu}(\R^{N})\\
&w_{n}\rightarrow w \mbox{ a.e. in } \R^{N}. \nonumber
\end{align}
Now we distinguish two cases.

\noindent
Firstly, let us suppose $w=0$.
As in \cite{J}, we can say that for any $n\in \N$ there exists  $t_{n} \in [0,1]$ such that
\begin{equation}\label{2.36}
\J_{\lambda_{n}}(t_{n}u_{n})=\max_{t\in [0,1]} \J_{\lambda_{n}}(t u_{n}).
\end{equation}
Since (\ref{2.33}) hold true, for any $j\in \N$, we can choose $r_{j}=(2jp)^{1/p} w_{n}$ such that
\begin{equation}\label{2.361}
r_{j}||u_{n}||^{-1}\in (0,1)
\end{equation}
provided $n$ is large enough.
By (\ref{2.34}), $F(\cdot,0)=0$ and the continuity of $F$, we can see 
\begin{equation}\label{2.362}
F(x,r_{j}w_{n})\rightarrow F(x,r_{j}w)=0 \mbox{ a.e. } x\in \R^{N}
\end{equation}
as $n\rightarrow \infty$ for any $j\in \N$. 
Then, taking into account (\ref{F}), (\ref{2.34}), (\ref{2.362}), $(f2)$ and by using the Dominated Convergence Theorem we deduce that 
\begin{equation}\label{2.37}
F(x,r_{j}w_{n})\rightarrow 0 \mbox{ in } L^{1}(\R^{N})
\end{equation}
as $n\rightarrow \infty$ for any $j\in \N$. 
Then (\ref{2.36}), (\ref{2.361}) and (\ref{2.37}) yield
$$
\mathcal{J}_{\lambda_{n}}(t_{n} u_{n})\geq \mathcal{J}_{\lambda_{n}}(r_{j}w_{n})\geq 2j-\lambda_{n} \int_{\R^{N}} F(x,r_{j}w_{n}) dx\geq j
$$
provided $n$ is large enough and for any $j\in \N$. 
As a consequence
\begin{equation}\label{3.31}
\mathcal{J}_{\lambda_{n}}(t_{n}u_{n})\rightarrow \infty \mbox{ as } n\rightarrow \infty.
\end{equation}
Since $\mathcal{J}_{\lambda_{n}}(0)=0$ and $\mathcal{J}_{\lambda_{n}}(u_{n})\in [c_{k}, d_{k}]$, we deduce that $t_{n}\in (0,1)$ for $n$ large enough. Thus, by (\ref{2.36}) we have
\begin{equation}\label{2.38}
\langle{\mathcal{J}'_{\lambda_{n}}(t_{n}u_{n}),t_{n}u_{n}}\rangle=t_{n} \frac{d}{dt}\Bigr|_{t=t_{n}} \mathcal{J}_{\lambda_{n}}(t u_{n})=0.
\end{equation} 
Taking into account $(f3)$, (\ref{2.38}) and (\ref{derivative}) we get
\begin{align*}
\frac{1}{\theta}\mathcal{J}_{\lambda_{n}}(t_{n}u_{n})&=\frac{1}{\theta}\Bigl(\mathcal{J}_{\lambda_{n}}(t_{n}u_{n})- \frac{1}{p}\langle{\mathcal{J}'_{\lambda_{n}}(t_{n}u_{n}),t_{n}u_{n}}\rangle \Bigr)\\
&= \frac{\lambda_{n}}{\theta p} \int_{\R^{N}} \mathcal{F}(x,t_{n}u_{n}) dx \\
&\leq \frac{\lambda_{n}}{p} \int_{\R^{N}} \mathcal{F}(x, u_{n}) dx \\
&= \mathcal{J}_{\lambda_{n}}(u_{n})- \frac{1}{p}\langle{\mathcal{J}'_{\lambda_{n}}(u_{n}), u_{n}}\rangle=\mathcal{J}_{\lambda_{n}}(u_{n})
\end{align*}
which contradicts (\ref{2.32}) and (\ref{3.31}).\\
Secondly, assume that
\begin{equation}\label{zdiv0}
w\not\equiv 0.
\end{equation}
Thus the set $\Omega:=\{x\in \R^{N}: w(x)\neq 0\}$ has positive Lebesgue measure. By using (\ref{2.33}) and (\ref{zdiv0}) we have
\begin{equation}\label{3.34}
|u_{n}(x)|\rightarrow \infty \mbox{ a.e. } x\in \Omega \mbox{ as } n\rightarrow \infty.
\end{equation}
Putting together (\ref{2.34}), (\ref{3.34}), and $(f2)$, and by applying Fatou's Lemma, we can easily deduce that
\begin{align*}
\frac{1}{p}-\frac{\mathcal{J}_{\lambda_{n}}(u_{n})}{||u_{n}||^{p}}
&=\lambda_{n} \int_{\R^{N}}\frac{F(x,u_{n}(x))}{||u_{n}||^{p}} dx \\
&\geq \lambda_{n} \int_{\Omega} |w_{n}|^{p} \frac{F(x,u_{n}(x))}{|u_{n}|^{p}} dx  \rightarrow \infty \mbox{ as }  n\rightarrow \infty
\end{align*}
which gives a contradiction because of (\ref{2.32}).\\
Then, we have proved that the sequence $\{u_{n}\}$ is bounded in $E$.\\
\end{proof}
\end{lem}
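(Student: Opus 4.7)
The plan is a contradiction argument. Assume $\|u_n\| \to \infty$ after relabeling and set $w_n = u_n/\|u_n\|$. Since $E$ is reflexive and compactly embedded in $L^p(\R^N) \cap L^\nu(\R^N)$ by Lemma \ref{T1}, along a subsequence $w_n \rightharpoonup w$ in $E$, $w_n \to w$ in $L^p \cap L^\nu$, and $w_n \to w$ a.e. I would then split into the standard dichotomy $w \not\equiv 0$ versus $w \equiv 0$.

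In the non-vanishing case, set $\Omega = \{w \neq 0\}$, which has positive measure. On $\Omega$ we have $|u_n(x)| = |w_n(x)|\,\|u_n\| \to \infty$ a.e. Writing $\mathcal{J}_{\lambda_n}(u_n)/\|u_n\|^p = 1/p - \lambda_n \int F(x,u_n)/\|u_n\|^p\,dx$, the superlinearity $(f2)$ forces $F(x,u_n)/|u_n|^p \to \infty$ a.e. on $\Omega$, and Fatou's lemma applied to $\int_\Omega |w_n|^p \, F(x,u_n)/|u_n|^p\,dx$ drives the right-hand side to $+\infty$. This contradicts the bound $\mathcal{J}_{\lambda_n}(u_n) \in [c_k, d_k]$ from (\ref{2.32}).

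The vanishing case $w \equiv 0$ is where the work lies and I expect it to be the main obstacle, since compactness only gives $w_n \to 0$ in $L^p \cap L^\nu$ and the energy identities at $u_n$ alone no longer suffice. The idea is a Jeanjean-style mountain-pass comparison. For each $j \in \N$, put $r_j = (2jp)^{1/p}$ and consider $r_j w_n$. From (\ref{F}) and $w_n \to 0$ in the relevant Lebesgue spaces, $\int F(x, r_j w_n)\,dx \to 0$ by dominated convergence, hence $\mathcal{J}_{\lambda_n}(r_j w_n) \to r_j^p/p = 2j$ as $n \to \infty$. Let $t_n \in [0,1]$ maximise $t \mapsto \mathcal{J}_{\lambda_n}(t u_n)$ on $[0,1]$. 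Since $\|u_n\| \to \infty$, $r_j/\|u_n\| \in (0,1)$ for $n$ large, so $\mathcal{J}_{\lambda_n}(t_n u_n) \geq \mathcal{J}_{\lambda_n}(r_j w_n) \to 2j$ for each $j$, forcing $\mathcal{J}_{\lambda_n}(t_n u_n) \to \infty$. Because $\mathcal{J}_{\lambda_n}(0) = 0$ and $\mathcal{J}_{\lambda_n}(u_n)$ is bounded, $t_n$ must eventually lie in the open interval $(0,1)$, yielding the interior-critical-point identity $\langle \mathcal{J}'_{\lambda_n}(t_n u_n), t_n u_n \rangle = 0$.

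At this point I would invoke $(f3)$ with $\tau = t_n \in [0,1]$ and $t = u_n$, which gives pointwise $\mathcal{F}(x, t_n u_n) \leq \theta\, \mathcal{F}(x, u_n)$. Using the algebraic identity $\mathcal{J}_{\lambda}(v) - \frac{1}{p}\langle \mathcal{J}'_{\lambda}(v), v\rangle = \frac{\lambda}{p}\int \mathcal{F}(x,v)\,dx$ at both $v = t_n u_n$ (where the derivative vanishes by the interior maximum) and $v = u_n$ (where the derivative vanishes by (\ref{2.32})), the $(f3)$ comparison integrates to $\mathcal{J}_{\lambda_n}(t_n u_n) \leq \theta\, \mathcal{J}_{\lambda_n}(u_n) \leq \theta d_k$, contradicting $\mathcal{J}_{\lambda_n}(t_n u_n) \to \infty$. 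Thus the $(f3)$-based transfer from the interior maximiser back to the bounded sequence $u_n$ is the crux of the argument, and it is precisely this comparison mechanism that allows one to dispense with the classical Ambrosetti--Rabinowitz superquadraticity.
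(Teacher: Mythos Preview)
Your proposal is correct and follows essentially the same approach as the paper: the normalized-sequence contradiction with the $w\equiv 0$ / $w\not\equiv 0$ dichotomy, Fatou plus $(f2)$ in the non-vanishing case, and Jeanjean's interior-maximiser trick combined with the $(f3)$ comparison $\mathcal{J}_{\lambda_n}(t_n u_n)\leq \theta\,\mathcal{J}_{\lambda_n}(u_n)$ in the vanishing case. The only cosmetic difference is the order in which you treat the two cases.
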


\noindent
Now we are able to provide the proof of Theorem \ref{thm1}:
\begin{proof}[Proof of Theorem $1$]
Taking into account Lemma \ref{claim2} and (\ref{2.32}), for each $k\geq k_{1}$, we can use similar arguments to those in the proof of Lemma \ref{claim1}, to show that the sequence $\{u_{n}^{k} \}$ admits a strong convergent subsequence with the limit $u^{k}$ being just a critical point of $\J_{1}=\J$. Clearly, $\J(u^{k})\in [c_{k}, d_{k}]$ for all $k\geq k_{1}$. \\
Since $c_{k}\rightarrow \infty$ as $k\rightarrow \infty$ in (\ref{2.27}), we deduce the existence of infinitely many nontrivial critical points of $\J$. As a consequence, we have that (\ref{P}) possesses infinitely many nontrivial weak solutions.

\end{proof}

\end{document}